\def\NZQ{\mathbb}               
\def\F2{{\NZQ F}_2}
\def\opn#1#2{\def#1{\operatorname{#2}}} 
\opn\chara{char} \opn\length{\ell} \opn\pd{pd} \opn\rk{rk}
\opn\projdim{proj\,dim} \opn\injdim{inj\,dim} \opn\rank{rank}
\opn\depth{depth} \opn\codepth{codepth} \opn\grade{grade}
\opn\height{height} \opn\embdim{emb\,dim} \opn\codim{codim}
\opn\Tr{Tr} \opn\bigrank{big\,rank}
\opn\superheight{superheight}\opn\lcm{lcm}
\opn\trdeg{tr\,deg}%
\opn\reg{reg} \opn\lreg{lreg} \opn\skel{skel}
\opn\Gr{Gr}
\opn\ann{ann}
\opn\sign{sign}
\opn\div{div} \opn\Div{Div} \opn\cl{cl} \opn\Cl{Cl}
\opn\Spec{Spec} \opn\Supp{Supp} \opn\supp{supp} \opn\Sing{Sing}
\opn\Ass{Ass}\opn\fdepth{fdepth}
\opn\Ann{Ann} \opn\Rad{Rad} \opn\Soc{Soc}
\opn\Sym{Sym} \opn\Ker{Ker} \opn\Coker{Coker} \opn\Im{Im}
\opn\Hom{Hom} \opn\Tor{Tor} \opn\Ext{Ext} \opn\End{End}
\opn\Aut{Aut} \opn\id{id} \opn\ini{in} \opn\tr{tr}
\opn\nat{nat}\opn\it{it}
\opn\pff{proof}
\opn\Pf{proof} \opn\GL{GL} \opn\SL{SL} \opn\mod{mod} \opn\ord{ord}
\opn\aff{aff} \opn\con{conv} \opn\relint{relint} \opn\st{st}
\opn\lk{lk} \opn\cn{cn} \opn\core{core} \opn\vol{vol}
\opn\link{link} \opn\star{star} \opn\skel{skel} \opn\indeg{indeg}
\opn\Ass{Ass} \opn\Min{Min} \opn\sdepth{sdepth} \opn\depth{depth}
\opn\gr{gr}
\def\pot#1#2{#1[\kern-0.28ex[#2]\kern-0.28ex]}
\opn\dirlim{\underrightarrow{\lim}}
\opn\inivlim{\underleftarrow{\lim}}
\def\Implies{\ifmmode\Longrightarrow \else
     \unskip${}\Longrightarrow{}$\ignorespaces\fi}
\def\implies{\ifmmode\Rightarrow \else
     \unskip${}\Rightarrow{}$\ignorespaces\fi}
\def\iff{\ifmmode\Longleftrightarrow \else
     \unskip${}\Longleftrightarrow{}$\ignorespaces\fi}
\opn\d{d}
\newtheorem{Theorem}{Theorem}[section]
\newtheorem{Lemma}[Theorem]{Lemma}
\newtheorem{Corollary}[Theorem]{Corollary}
\newtheorem{Example}[Theorem]{Example}
\newtheorem{Definition}[Theorem]{Definition}
\let\epsilon\varepsilon
\let\phi=\varphi
\let\kappa=\varkappa
\def\qed{\ifhmode\textqed\fi
   \ifmmode\ifinner\quad\qedsymbol\else\dispqed\fi\fi}
\def\textqed{\unskip\nobreak\penalty50
    \hskip2em\hbox{}\nobreak\hfil\qedsymbol
    \parfillskip=0pt \finalhyphendemerits=0}
\def\dispqed{\rlap{\qquad\qedsymbol}}
\opn\Gin{Gin}
\opn\inii{in} \opn\inim{inm} \opn\rate{rate}
\numberwithin{equation}{section}
\title{Chordal circulant graphs and induced matching number}
\keywords{}
\address{Department of Mathematics\\
University of Trento\\
via Sommarive, 14\\
38123 Povo (Trento), Italy
}
\author{Francesco Romeo}
\date{}
\begin{document}
\maketitle
\begin{abstract}
Let $G=C_{n}(S)$ be a circulant graph on $n$ vertices. In this paper we characterize chordal circulant graphs and then we compute $\nu (G)$, the induced matching number of $G$. These latter are useful in bounding the Castelnuovo-Mumford regularity of the edge ring of $G$.
\end{abstract}

\section*{Introduction}\label{sec:intro}
Let $G$ be a finite simple graph with vertex set $V(G)$ and edge set $E(G)$. 
Let $\mathcal{C}$ be a cycle of $G$. An edge $\{v,w\}$ in $E(G) \setminus E(\mathcal{C})$ with $v,w$ in $V(\mathcal{C})$ is a \textit{chord} of $\mathcal{C}$.
A graph $G$ is said to be \textit{chordal} if every cycle has a chord. \\
We recall that a circulant graph is defined as follows. Let $S\subseteq T:= \{ 1,2,\ldots,\left \lfloor\frac{n}{2}\right \rfloor\}$. The \textit{circulant graph} $G:=C_n(S)$ is a simple graph with $V(G)=\mathbb{Z}_n=\{0,\ldots,n-1\}$ and $E(G) := \{ \{i, j\} \mid |j-i|_n \in S \}$ where $|k|_n=\min\{|k|,n-|k|\}$. Given $i,j \in V(G)$ we call \textit{labelling distance} the number $|i-j|_n$. By abuse of notation we write $C_n(a_1,a_2,\ldots,a_s)$ instead of $C_n(\{a_1,a_2,\ldots, a_s\})$. \\ 
Circulant graphs have been studied under combinatorial (\cite{BH0,BH1}) and algebraic (\cite{Ri}) points of view. In the former, the authors studied some families of circulants, i.e. the $d$-th powers of a cycle, namely the circulants $C_n(1,2,\ldots,d)$ (that we will analyse in Section \ref{sec:two}) and their complements . In the latter, the author studied some properties of the edge ideal of circulants. Let $R= K[x_0, \dots, x_{n-1}]$ be the polynomial ring on $n$ variables over a field $K$. The \textit{edge ideal} of $G$, denoted by $I(G)$, is the ideal of $R$ generated by all square-free monomials $x_i x_j$ such that $\{i,j\} \in E(G)$. The quotient ring $R/I(G)$ is called \textit{edge ring} of $G$. Some algebraic properties and invariants of $R/I(G)$ can be derived from combinatorial properties of $G$. Chordality and the induced matching number have been used to give bounds on the Castelnuovo-Mumford regularity of $R/I(G)$ (see Section \ref{sec:pre}). 

In Section \ref{sec:one} we prove that a circulant graph is chordal if and only if it is either complete or a disjoint union of complete graphs.\\
In Section \ref{sec:two} we give an explicit formula for the induced matching number of a circulant graph $C_n(S)$ depending on the cardinality and the structure of the set $S$. Moreover, by using \texttt{Macaulay2}, we compare the Castelnuovo-Mumford regularity of $R/I(G)$ with $\nu(G)$, the lower bound of Theorem \ref{kat}, when $G$ is the $d$-th power of a cycle and $n$ is less than or equal to 15. We report the result in Table \ref{Tab}.

\section{Preliminaries}\label{sec:pre}
In this section we recall some concepts and notation that we will use later on in this article.\\

We recall that the circulant graph $C_n(1,2, \ldots , \lfloor \frac{n}{2} \rfloor )$ is the complete graph $K_n$.
Moreover, we compute the number of components of a circulant graph with the following 
\begin{Lemma}\label{comp}
Let $S=\lbrace a_{1},\ldots,a_{r}\rbrace$ be a subset of $T$ and let $G=C_n(S)$ be a circulant graph. Then $G$ has $\gcd(n,a_{1},\ldots,a_{r})$ disjoint components. In particular, $G$ is connected if and only if $\gcd(n,a_{1},\ldots,a_{r})=1$.
\end{Lemma}
For a proof see \cite{BT}.
From Lemma \ref{comp} it follows that if $n=dk$, then the disjoint components of $C_n(a_1 d, a_2 d, \ldots , a_s d)$ are $d$ copies of the circulant graph $C_k(a_1 , a_2 , \ldots , a_s) $. 

Let $G$ be a graph. A collection $C$ of edges in $G$ is called an \emph{induced matching} of $G$ if the edges of $C$ are pairwise disjoint and the graph having $C$ has edge set is an induced subgraph of $G$. The maximum size of an induced matching of $G$ is called \emph{induced matching number} of $G$ and we denote it by $\nu(G)$.

Let $\mathbb{F}$ be the minimal free resolution of $R/I(G)$. Then 
\[
\mathbb{F} \ : \ 0 \rightarrow F_{p} \rightarrow F_{p-1} \rightarrow \ldots \rightarrow F_{0} \rightarrow R/I(G)\rightarrow 0
\]
where $F_{i}=\bigoplus\limits_j R(-j)^{\beta_{i,j}}$. The $\beta_{i,j}$ are called the \emph{Betti numbers} of $\mathbb{F}$.
The \emph{Castelnuovo-Mumford regularity} of $R/I(G)$, denoted by $\mbox{reg}\  R/I(G)$ is defined as 
\[
\mbox{reg} \ R/I(G)=\max\{j-i: \beta_{i,j} \}.
\]

Let $G$ be a graph. The \textit{complement graph} $\bar{G}$ of $G$ is the graph whose vertex set is $V(G)$ and whose edges are the non-edges of $G$. We conclude the section by stating some known results relating chordality and induced matching number to the Castelnuovo-Mumford regularity. The first one is due to Fr\"oberg (\cite[Theorem 1]{Fr})
\begin{Theorem}\label{fr}
Let $G$ be a graph. Then $\reg R/I(G) \leq 1$ if and only if $\bar{G}$ is chordal. 
\end{Theorem}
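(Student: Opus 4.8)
The plan is to prove Theorem~\ref{fr} from Hochster's formula for the graded Betti numbers of a Stanley--Reisner ring, together with the classical fact that the clique complex of a chordal graph is acyclic in positive degrees.

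First I would record the Stanley--Reisner dictionary. The ideal $I(G)$ is the Stanley--Reisner ideal of the independence complex $\Delta=\Delta(G)$, whose faces are the independent sets of $G$; equivalently, $\Delta$ is the clique complex of $\bar G$, and for $W\subseteq V(G)$ the induced subcomplex $\Delta|_W$ is the clique complex of $\bar G[W]$. By Hochster's formula,
\[
\beta_{i,j}\bigl(R/I(G)\bigr)=\sum_{W\subseteq V(G),\ |W|=j}\dim_K\widetilde{H}_{j-i-1}\bigl(\Delta|_W;K\bigr),
\]
so $\reg R/I(G)\le 1$ holds if and only if $\widetilde{H}_{\ell}(\Delta|_W;K)=0$ for every $W\subseteq V(G)$ and every $\ell\ge 1$; in words, if and only if the clique complex of every induced subgraph of $\bar G$ has vanishing reduced homology in positive degrees.

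Then I would handle the two implications. Suppose $\bar G$ is chordal. Since an induced subgraph of a chordal graph is chordal, it is enough to show that the clique complex $\Delta(H)$ of an arbitrary chordal graph $H$ has vanishing reduced homology in degrees $\ge 1$, and I would prove this by induction on $|V(H)|$. A chordal graph has a simplicial vertex $v$, i.e.\ $N_H(v)$ is a clique; consequently the link of $v$ in $\Delta(H)$ is a full simplex, hence contractible, and the closed star of $v$ (also a simplex) is glued to $\Delta(H\setminus v)$ precisely along this link. When $v$ is not isolated a standard gluing argument gives $\Delta(H)\simeq\Delta(H\setminus v)$; when $v$ is isolated $\Delta(H)$ is $\Delta(H\setminus v)$ together with one extra point. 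In either case the reduced homology in degrees $\ge 1$ agrees with that of $\Delta(H\setminus v)$, and $H\setminus v$ is a smaller chordal graph, so induction applies. For the converse, suppose $\bar G$ is not chordal; then $\bar G$ has a chordless cycle on a vertex set $W$ with $m:=|W|\ge 4$, so $\bar G[W]$ is the $m$-cycle and contains no triangle. Hence $\Delta|_W$ is this cycle viewed as a $1$-dimensional complex, a triangulation of $S^1$, and $\widetilde{H}_1(\Delta|_W;K)\cong K\ne 0$. Substituting $j=m$ and $i=m-2$ in Hochster's formula gives $\beta_{m-2,m}\bigl(R/I(G)\bigr)\ne 0$, whence $\reg R/I(G)\ge m-(m-2)=2$.

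The one genuinely non-formal ingredient, and the step I expect to require the most care, is the acyclicity in positive degrees of the clique complex of a chordal graph: it relies on the existence of a simplicial vertex and on a careful description of the star and the link of such a vertex inside the clique complex. Everything else is bookkeeping with Hochster's formula and the homotopy type of a cycle.
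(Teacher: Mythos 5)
Your proposal is correct, but there is nothing in the paper to compare it against: the paper does not prove this statement at all, it simply quotes it as Fr\"oberg's theorem with a citation to [Fr, Theorem 1]. What you have written is, in effect, the now-standard proof of that theorem: translate $I(G)$ into the Stanley--Reisner ideal of the clique complex $\Delta(\bar G)$, reduce the regularity bound to the vanishing of $\widetilde{H}_{\ell}(\Delta|_W;K)$ for $\ell\ge 1$ via Hochster's formula, handle the chordal direction by induction on a simplicial vertex, and handle the converse by exhibiting an induced $m$-cycle ($m\ge 4$) in $\bar G$, whose clique complex is a triangulated circle contributing $\beta_{m-2,m}\ne 0$. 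All the steps check out: the equivalence $\reg R/I(G)\le 1 \iff \widetilde{H}_{\ell}(\Delta|_W;K)=0$ for all $W$ and all $\ell\ge 1$ is exactly what Hochster's formula gives; the decomposition $\Delta(H)=\overline{\operatorname{st}}(v)\cup\Delta(H\setminus v)$ glued along the full simplex on $N_H(v)$ is right, and Mayer--Vietoris (or the quotient argument you sketch) yields the isomorphism in positive degrees; a chordless cycle is automatically induced, so $\bar G[W]=C_m$ and $\widetilde{H}_1\ne 0$ over any field, which also shows the conclusion is characteristic-free. The two external inputs you should cite explicitly are Hochster's formula and Dirac's theorem that every chordal graph has a simplicial vertex; with those references in place the argument is complete. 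Note that Fr\"oberg's original 1990 argument is organized differently (it works directly with linear resolutions of the Stanley--Reisner ring rather than through Hochster's formula in this packaged form), so your route is the cleaner modern one, and it is certainly an acceptable replacement for the citation in this paper.
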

The second one is due to Katzman  (\cite[Lemma 2.2]{Ka}).
\begin{Theorem}\label{kat}
For any graph $G$, we have $\reg R/I(G)\geq \nu(G)$. 
\end{Theorem}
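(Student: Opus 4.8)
The plan is to deduce this from Hochster's formula, which expresses the graded Betti numbers of a squarefree monomial ideal in terms of reduced simplicial homology. First I would recall that $I(G)$ is the Stanley--Reisner ideal of the \emph{independence complex} $\Delta := \mathrm{Ind}(G)$, whose faces are exactly the independent sets of $G$, and that for a subset $W \subseteq V(G)$ the restriction $\Delta_W$ coincides with $\mathrm{Ind}(G[W])$, the independence complex of the induced subgraph on $W$. Hochster's formula then reads
\[
\beta_{i,j}(R/I(G)) \ = \ \sum_{\substack{W \subseteq V(G) \\ |W| = j}} \dim_K \tilde{H}_{\,j-i-1}\bigl(\mathrm{Ind}(G[W]); K\bigr),
\]
so that $\reg R/I(G) = \max\{\, j-i : \tilde{H}_{\,j-i-1}(\mathrm{Ind}(G[W]);K) \neq 0 \text{ for some } W\,\}$. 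Since every summand on the right is non-negative, no cancellation occurs, and it suffices to exhibit one subset $W$ together with one degree realizing a contribution of size $\nu(G)$.

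Next I would set $c := \nu(G)$, fix a maximum induced matching $\{e_1,\dots,e_c\}$ with $e_\ell = \{a_\ell,b_\ell\}$, and put $W := \{a_1,b_1,\dots,a_c,b_c\}$, so $|W| = 2c$. By the definition of an induced matching, $G[W]$ is the disjoint union of the $c$ edges $e_1,\dots,e_c$. The independence complex of a disjoint union of graphs is the simplicial join of the independence complexes of the summands, and the independence complex of a single edge $K_2$ is a pair of isolated vertices, i.e.\ a copy of $S^0$; hence $\mathrm{Ind}(G[W]) \cong S^0 * \cdots * S^0 \cong S^{c-1}$ ($c$ join factors).

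It follows that $\tilde{H}_{c-1}(\mathrm{Ind}(G[W]);K) \cong K \neq 0$, and taking $j = |W| = 2c$ and $i = c$ in the displayed formula shows $\beta_{c,2c}(R/I(G)) \neq 0$, whence $\reg R/I(G) \geq 2c - c = c = \nu(G)$. (If $\nu(G) = 0$ there is nothing to prove, as $\reg R/I(G) \geq 0$ always.) An alternative, essentially equivalent route would invoke monotonicity of regularity under passing to induced subgraphs together with the fact that the edge ring of a disjoint union of $c$ edges is a complete intersection of $c$ quadrics, hence has regularity $c$ via its Koszul resolution; but proving that monotonicity statement again goes through Hochster's formula, so the direct argument above is the cleanest.

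The only steps requiring genuine care are the precise index bookkeeping in Hochster's formula and the identification $\mathrm{Ind}(G[W]) \cong S^{c-1}$ through the join decomposition for disjoint unions. Both are standard; once they are in place the inequality is immediate, precisely because the homology contributions in Hochster's formula are non-negative and therefore cannot cancel.
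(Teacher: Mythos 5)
Your proof is correct: the index bookkeeping in Hochster's formula is right ($\beta_{i,j}(R/I(G))=\sum_{|W|=j}\dim_K\tilde{H}_{j-i-1}(\mathrm{Ind}(G[W]);K)$), the identification of the independence complex of $c$ disjoint edges with $S^{c-1}$ via the join decomposition is right, and the non-negativity of the summands legitimately lets you conclude $\beta_{c,2c}(R/I(G))\neq 0$, hence $\reg R/I(G)\geq c=\nu(G)$. The paper itself gives no proof of this statement, quoting it from Katzman \cite[Lemma 2.2]{Ka}, and your argument is essentially the standard proof of that lemma (equivalent to the restriction-to-$W$ plus complete-intersection route you mention in passing), so there is nothing in the paper to contrast it with.
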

When $G$ is the circulant graph $C_{n}({1})$, namely the cycle on $n$ vertices, we have the following result due to Jacques (\cite{Ja}).
\begin{Theorem}\label{cyc}
Let $C_n$ be the $n$-cycle and let $I=I(C_n)$ be its edge ideal. Let $\nu=\lfloor \frac{n}{3}\rfloor$ denote the induced matching of $C_n$. Then

\[
\reg R/I=\begin{cases}
\begin{aligned}
 \   &\nu \ &\mbox{if  } \  &n &\equiv \ & 0,1  &\pmod 3& \\
  \ &\nu +1 \ &\mbox{if  } \  &n &\equiv \ &2  &\pmod 3&. \\
\end{aligned}
\end{cases}
\]
\end{Theorem}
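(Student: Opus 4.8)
The statement splits into the combinatorial identity $\nu(C_n)=\lfloor n/3\rfloor$ and the homological formula for $\reg R/I(C_n)$. The first I would dispose of directly: listing the edges of an induced matching $M$ cyclically around $C_n$, the ``induced'' condition forces at least one $M$-uncovered vertex strictly between the endpoints of consecutive edges (including the wrap-around pair), so $|M|$ edges occupy at least $3|M|$ of the $n$ vertices and $|M|\le\lfloor n/3\rfloor$; conversely $\{\{3i,3i+1\}:0\le i<\lfloor n/3\rfloor\}$ is an induced matching. Write $\nu:=\lfloor n/3\rfloor$.

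For the upper bound on $\reg R/I(C_n)$ the plan is to use the short exact sequence attached to the variable $x_0$,
\[
0\longrightarrow\bigl(R/(I(C_n):x_0)\bigr)(-1)\xrightarrow{\ \cdot x_0\ }R/I(C_n)\longrightarrow R/(I(C_n)+(x_0))\longrightarrow 0,
\]
and the elementary consequence $\reg R/I(C_n)\le\max\{\reg R/(I(C_n):x_0)+1,\ \reg R/(I(C_n)+(x_0))\}$. Here $R/(I(C_n)+(x_0))$ is the edge ring of the path $P_{n-1}$ obtained by deleting vertex $0$, while $(I(C_n):x_0)=(x_1,x_{n-1})+I(P_{n-3})$ with $P_{n-3}$ the path induced on $\{2,\dots,n-2\}$, so $\reg R/(I(C_n):x_0)=\reg R/I(P_{n-3})$. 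Substituting the known value $\reg R/I(P_m)=\lfloor(m+1)/3\rfloor$ (valid because a forest $P_m$ satisfies $\reg R/I(P_m)=\nu(P_m)$; see \cite{Ja}) and splitting into residue classes of $n$ modulo $3$, one gets $\reg R/I(C_n)\le\lfloor(n+1)/3\rfloor$, i.e. $\le\nu$ if $n\equiv0,1\pmod3$ and $\le\nu+1$ if $n\equiv2\pmod3$.

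For the lower bound, Theorem~\ref{kat} gives $\reg R/I(C_n)\ge\nu(C_n)=\nu$, which already closes the cases $n\equiv0,1\pmod3$. The remaining point is the strict inequality $\reg R/I(C_{3k+2})\ge k+1$. I would note first that this cannot be seen on a proper induced subgraph, since every induced subgraph of $C_n$ is a disjoint union of paths and such a union has regularity $\le\nu$; hence one must examine the full independence complex $\mathrm{Ind}(C_n)$ (the Stanley--Reisner complex of $R/I(C_n)$). By Hochster's formula, $\beta_{i,n}(R/I(C_n))=\dim_K\widetilde H_{n-i-1}(\mathrm{Ind}(C_n);K)$, and it is classical that $\mathrm{Ind}(C_{3k+2})\simeq S^{k}$; therefore $\widetilde H_k(\mathrm{Ind}(C_{3k+2}))\ne0$, so $\beta_{2k+1,\,3k+2}\ne0$ and $\reg R/I(C_n)\ge(3k+2)-(2k+1)=k+1$. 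Combined with the upper bound this yields the asserted formula.

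\textbf{Main obstacle.} The induced-matching count and the exact-sequence bookkeeping are routine; the content sits in the two imported facts. The path formula $\reg R/I(P_m)=\lfloor(m+1)/3\rfloor$ is \emph{not} delivered by the naive short exact sequence (that bound overshoots by one when $m\equiv1\pmod3$), so it needs its own argument --- via sequential Cohen--Macaulayness of forests, or a direct description of the (Fibonacci-type) minimal free resolution. And the homotopy equivalence $\mathrm{Ind}(C_{3k+2})\simeq S^{k}$ is the crux of the $n\equiv2$ case; I would derive it from the cofibre sequence $\mathrm{Ind}(C_n)\simeq\Sigma\,\mathrm{Ind}(P_{n-3})$, available because $\mathrm{Ind}(C_n\setminus v)=\mathrm{Ind}(P_{n-1})$ is contractible exactly when $n\equiv2\pmod3$, together with $\mathrm{Ind}(P_{3k-1})\simeq S^{k-1}$. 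Alternatively the whole theorem follows uniformly from Hochster's formula once the homotopy types of all $\mathrm{Ind}(C_n)$ and $\mathrm{Ind}(P_m)$ are known, by maximising $j-i$ over the resulting nonzero Betti numbers.
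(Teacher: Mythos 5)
The paper offers no proof of this statement: Theorem \ref{cyc} is quoted verbatim from Jacques's thesis \cite{Ja}, so there is no internal argument to compare against. Judged on its own, your proposal is a correct and complete proof strategy. The induced-matching count, the colon-ideal exact sequence giving $\reg R/I(C_n)\le\max\{\reg R/I(P_{n-3})+1,\ \reg R/I(P_{n-1})\}$, the case analysis modulo $3$, the appeal to Theorem \ref{kat} for $n\equiv 0,1$, and the Hochster-formula argument from $\mathrm{Ind}(C_{3k+2})\simeq S^{k}$ for the extra $+1$ when $n\equiv 2$ are all sound, and you correctly flag the two genuinely nontrivial imports: the path formula $\reg R/I(P_m)=\lfloor (m+1)/3\rfloor$ (which, as you note, does not follow from the naive deletion at a leaf and needs either the colon at the leaf's neighbour, sequential Cohen--Macaulayness of forests, or Jacques's explicit resolution) and the homotopy type of the independence complex of the cycle, for which your suspension argument $\mathrm{Ind}(C_n)\simeq\Sigma\,\mathrm{Ind}(P_{n-3})$ when $\mathrm{Ind}(P_{n-1})$ is contractible is the standard and correct route. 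Your observation that the $n\equiv2$ lower bound is invisible on proper induced subgraphs, forcing the use of the full vertex set in Hochster's formula, is a good sanity check. For context, Jacques's own proof proceeds differently in emphasis: he computes \emph{all} graded Betti numbers of $R/I(C_n)$ (and of paths) explicitly via Hochster's formula and recursions on the reduced homology of these independence complexes, from which the regularity is read off as a corollary; your argument isolates just the regularity and is leaner, at the cost of importing the two facts above rather than rederiving them.
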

\section{Chordality of circulants}\label{sec:one}
\label{mainsection}
The aim of this section is to prove the following 

\begin{Theorem}\label{chord}
Let $G$ be a circulant graph.
Then $G$ is chordal if and only there exists $d\geq 1$ such that $n=dm$ and $G=C_{n}(d,2d,\ldots,\lfloor\frac{m}{2}\rfloor d)$.
\end{Theorem}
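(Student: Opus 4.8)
The plan is to prove both implications, with the characterization of Theorem~\ref{chord} reducing, via Lemma~\ref{comp}, to the connected case: if $d=\gcd(n,a_1,\dots,a_r)>1$, then $G$ splits into $d$ copies of a smaller circulant, and $G$ is chordal iff each copy is; so it suffices to show that a \emph{connected} circulant graph is chordal iff it is complete. The ``if'' direction is immediate, since $K_n$ (and hence any disjoint union of complete graphs) is chordal. For the ``only if'' direction, I would argue by contradiction: suppose $G=C_n(S)$ is connected, chordal, but not complete, and exhibit a chordless cycle.

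First I would extract structural consequences of chordality. Since $G$ is not complete, there is a labelling distance $b\in T\setminus S$, and since $G$ is connected, $1$ and more generally small distances interact with $S$ nontrivially; in particular $\gcd(n,S)=1$. The key local observation is: if $\{0,i\}$ and $\{0,j\}$ are edges (so $|i|_n,|j|_n\in S$) but $\{i,j\}\notin E(G)$, then for $G$ to be chordal every induced path from $i$ to $j$ avoiding $0$ must be ``short'', and more usefully, consecutive vertices along any geodesic behave rigidly. Concretely I would look at a shortest ``bad'' configuration. A clean route: let $a$ be the smallest element of $S$ and note $a\ge 2$ would force, together with connectivity, some $s,s'\in S$ with $s'-s$ small; pushing this, one shows $1\in S$. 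Then since $G\ne K_n$, pick the smallest $b$ with $b\notin S$; so $1,2,\dots,b-1\in S$ but $b\notin S$. Now consider the $4$-cycle on vertices $0,b-1,b,2b-1$: the edges $\{0,b-1\}$, $\{b-1,2b-1\}$ (distance $b$... wait) — here I would instead use the cycle $0,1,2,\dots,b,\,0$? no; the right cycle is $0,\ b-1,\ 2b-2,\ \dots$ Let me instead use: vertices $0$ and $b$ are non-adjacent; $0-1-2-\cdots-b$ is a path of edges all of distance $1\in S$; closing it up the chords would have to be distances $\le b-1$, all in $S$, so this particular cycle is \emph{not} chordless. The genuinely chordless cycle comes from choosing vertices spaced so that \emph{all} pairwise distances land outside $S\cup\{\text{cycle edges}\}$.

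So the technical heart, and the step I expect to be the main obstacle, is the explicit construction of a chordless cycle of length $\ge 4$ in a connected non-complete circulant. I would proceed as follows: let $b=\min(T\setminus S)$, so $\{1,2,\dots,b-1\}\subseteq S$ and, crucially, $b\le\lfloor n/2\rfloor$. Consider the vertices $v_k = k\cdot(b-1)$ — wait, the cleaner choice is $v_k = kb$ for $k=0,1,\dots$ reduced mod $n$: consecutive ones are at distance $b\notin S$... that gives a \emph{non}-cycle. The correct idea (this is the crux) is to take a cycle whose consecutive distances are $1$ (an arc of the Hamiltonian cycle $C_n(1)$) but long enough that its ``long chords'' are forced to have distance $\ge b$, yet short enough that they have distance $\le\lfloor n/2\rfloor$; specifically the cycle $C:\ 0,1,2,\dots,\ell-1,0$ with a single long edge $\{\ell-1,0\}$ of distance $n-\ell+1$. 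For this to be a cycle in $G$ we need $n-\ell+1\in S$ or $=1$; for it to be chordless we need $\{i,j\}\notin E(G)$ for all non-consecutive $i,j$ on it, i.e.\ $j-i\notin S$ for $2\le j-i\le\ell-3$ and the ``wrap'' distances $\notin S$ too. Balancing these constraints — choosing $\ell$ and using $b\notin S$ together with whatever forces a chordless wrap — is exactly where the combinatorial bookkeeping lives; I would handle it by taking $\ell = 2b$ (or $2b-1$ according to parity of $n$ versus $2b$), so that the cycle $0,1,\dots,2b-1,0$ has the closing edge of distance $n-2b+1$, all internal chords of distance between $2$ and $2b-3$ — which are in $\{1,\dots,b-1\}\subseteq S$, \emph{bad}. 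Hence $\ell=2b$ is wrong; instead take $\ell$ so that internal chords have distance $\ge b$: i.e.\ put the cycle vertices $b$ apart after all, but repair connectivity by interleaving. At this point I would concede that the honest proof picks the cycle $0,\,b,\,2b,\dots$ is impossible (not a cycle), so one instead takes the \emph{shortest} chordless cycle through $0$ and the pair realizing $b\notin S$, and shows by a minimality/exchange argument that its length is $\ge 4$ and all chords are absent; the contradiction is that a chordal graph has none such. The remainder — verifying that the split components in the non-connected case are themselves of the stated form $C_k(1,2,\dots,\lfloor k/2\rfloor)$ — then follows by applying the connected case and Lemma~\ref{comp}, giving $G=C_n(d,2d,\dots,\lfloor\frac m2\rfloor d)$ with $n=dm$.
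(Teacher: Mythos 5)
Your reduction to the connected case and the easy direction are fine, and your instinct that everything hinges on exhibiting an explicit chordless cycle in a connected, non-complete circulant is exactly right --- but that is precisely the step you do not carry out, and it is where all the content lives. Two concrete gaps. First, the assertion that connectivity ``forces'' $1\in S$ is false as stated and unproved in the form you would need: $C_{10}(3,4)$ is connected with $1\notin S$. What must actually be shown is that $1\notin S$ (more generally, $\gcd(a_1,\dots,a_r)\notin S$ for some $a_i\in S$) already implies non-chordality; in the paper this is Lemma~\ref{notch}, whose proof is a genuine induction using a B\'ezout identity $ra+sb=d$, the candidate $4$-cycle $\{0,ra,d,sb\}$, and an iteration over divisors of $n$ to dispose of the possible stray chord $ra-sb$. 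Nothing in your sketch substitutes for this.

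Second, even granting $\{1,\dots,b-1\}\subseteq S$ and $b\notin S$, every candidate cycle you propose you yourself recognize fails (all internal chords of $0,1,\dots,2b-1,0$ have distance in $\{1,\dots,b-1\}\subseteq S$; the points $0,b,2b,\dots$ are pairwise non-adjacent and form no cycle at all). Your fallback --- take the shortest chordless cycle through $0$ and the pair realizing $b\notin S$ and show by a minimality/exchange argument that it is chordless --- is circular: the existence of a chordless cycle of length at least $4$ is exactly what has to be proved, and a shortest cycle through a prescribed non-adjacent pair need not be induced without further argument, which you do not supply. The paper's Lemma~\ref{orda} does the real work here: in case (1$S$) it builds a cycle $0,a,(r+1)a,\dots$ by alternately adding $a$ and $ra$ following the division algorithm $r+t=(r+1)q+s$, and in case (2$S$) it uses the vertices $0,ra,2ra,\dots,qra,\lfloor k/2\rfloor a,\dots$, arranging in both cases that every non-consecutive pair of cycle vertices has labelling distance inside the ``gap'' of $S$. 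Without a construction of this kind (and without Lemma~\ref{notch} to handle $1\notin S$), your proof does not close.
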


The $\Leftarrow )$ implication is trivial. If $d=1$, $G$ is the complete graph $K_n$, while if $d>1$, then $G$ is the disjoint union of $d$ complete graphs $K_m$.

To prove $\Rightarrow)$ implication we need some preliminary results.

\begin{Lemma}\label{orda}
Let $G=C_n(S)$ be a circulant graph.
Let us assume that there exists $a\in S$ with $k=\ord(a)\geq 4$ such that 
\[
\Big\{a,2a,\ldots , \Big\lfloor \frac{k}{2} \Big\rfloor a \Big \} \nsubseteq S.
\]
Then $G$ is not chordal.
\end{Lemma}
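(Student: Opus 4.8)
The plan is to restrict attention to the subgraph that $G$ induces on the cyclic subgroup $\langle a\rangle\subseteq\ZZ_n$: since an induced subgraph of a chordal graph is again chordal, it is enough to show that this induced subgraph is not chordal.

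First I would make the reduction explicit. Put $k=\ord(a)$, so that $\langle a\rangle=\{0,a,2a,\dots,(k-1)a\}$ has exactly $k$ elements, and consider the group isomorphism $\ZZ_k\to\langle a\rangle$, $i\mapsto ia$. Two classes $ia$ and $i'a$ are adjacent in $G$ exactly when $|(i-i')a|_n\in S$, and since $ka\equiv 0\pmod n$ this condition depends only on the residue of $i-i'$ modulo $k$, and only up to sign; hence $G[\langle a\rangle]$ is isomorphic to the circulant graph $C_k(S')$ on $\ZZ_k$ with $S'=\{\,j\in\{1,\dots,\lfloor k/2\rfloor\}:\ |ja|_n\in S\,\}$. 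Because $a\in S$ we have $1\in S'$, so $C_k(S')$ is connected by Lemma \ref{comp}; and the hypothesis $\{a,2a,\dots,\lfloor k/2\rfloor a\}\not\subseteq S$ is precisely the statement that $S'\neq\{1,\dots,\lfloor k/2\rfloor\}$, i.e. that $C_k(S')$ is not the complete graph $K_k$.

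So the lemma reduces to the following: a connected circulant graph that is not complete is not chordal. For this I would combine two standard facts. First, every circulant graph is vertex-transitive, since each shift $x\mapsto x+c$ is an automorphism of $C_k(S')$ and the shifts act transitively on $\ZZ_k$. Second, every finite chordal graph possesses a simplicial vertex (a vertex whose neighbourhood is a clique). Now if $C_k(S')$ were chordal it would have a simplicial vertex; applying automorphisms, every vertex of $C_k(S')$ would be simplicial, hence $C_k(S')$ would contain no induced path on three vertices and would therefore be a disjoint union of complete graphs. Being connected, it would equal $K_k$, contradicting $S'\neq\{1,\dots,\lfloor k/2\rfloor\}$. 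Thus $C_k(S')$, and with it $G$, is not chordal.

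The one step requiring real care is the translation in the second paragraph: one must check that $G[\langle a\rangle]$ is honestly the circulant $C_k(S')$ and that the hypothesis on $S$ becomes exactly ``$C_k(S')$ is not complete'' and nothing weaker. Everything after that is formal. If one prefers a self-contained argument avoiding the simplicial-vertex theorem, the alternative is to exhibit an explicit induced cycle of length $\ge 4$ inside $C_k(S')$; this works but is less clean, because an initial segment $\{1,\dots,\ell\}\subseteq S'$ turns $i,i+1,\dots,i+\ell$ into a clique and therefore forbids an induced cycle from using two consecutive unit steps, which makes the shape of the cycle one produces depend on a case distinction — on where the first gap of $S'$ occurs and on the parity of $k$.
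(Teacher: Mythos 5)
Your proof is correct, but it takes a genuinely different route from the paper's. The paper argues by a direct two-case construction: depending on where the first gap in $\{a,2a,\dots,\lfloor k/2\rfloor a\}\cap S$ occurs and whether a later multiple of $a$ reappears in $S$, it builds an explicit cycle of length at least $4$ all of whose non-adjacent vertex pairs have labelling distance among the missing multiples of $a$, hence a chordless cycle. You instead restrict to the induced subgraph on $\langle a\rangle$, identify it with the circulant $C_k(S')$ (your translation is carried out correctly: the paper's multiples $ja$ are to be read as $|ja|_n$, as its Example~1 confirms, and the hypothesis becomes exactly ``$C_k(S')$ is connected and not complete''), and then combine Dirac's theorem on the existence of a simplicial vertex in a chordal graph with vertex-transitivity to conclude that a connected chordal circulant must be complete. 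Your route is shorter and eliminates the case analysis entirely, at the cost of importing Dirac's theorem as an external ingredient and of not exhibiting an explicit chordless cycle (which the paper later uses to illustrate its examples). It is worth noting that your key step --- a connected vertex-transitive chordal graph is complete --- applied directly to a connected component of $G$ proves Theorem~\ref{chord} outright, so your argument in fact subsumes Lemma~\ref{notch} and most of Section~\ref{sec:one} as well.
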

\begin{proof}
Since $k\geq 4$, then $\{a\}\subset \{a,2a,\ldots , \lfloor \frac{k}{2} \rfloor a\}$.
If $ \Big\{a,2a,\ldots, \Big\lfloor \frac{k}{2} \Big\rfloor a \Big \} \nsubseteq S$ then we have two cases:
\begin{enumerate}
\item[(1$S$)] $\{a,2a,\ldots ,ra,(r+t)a \} \subseteq S$ and $(r+1)a,\ldots, (r+t-1)a \notin S$, with $r\geq 1$ and $t \geq 2$;
\item[(2$S$)] $\{a,2a,\ldots ,ra\}\subseteq S$ and $(r+1)a,\ldots, \lfloor \frac{k}{2} \rfloor a \notin S$, with $1 \leq r < \lfloor \frac{k}{2} \rfloor$.\\
\end{enumerate}
\begin{itemize}
\item[(1$S$)] 
We want to find a non-chordal cycle of $G$. We consider the edges $\{0,(r+t)a\}$, $\{0,a\}$, $\{a,(r+1)a\}$ (see Figure \ref{nona}). If $(r+1)a$ is adjacent to $(r+t)a$, then we found a non-chordal cycle of $G$. 
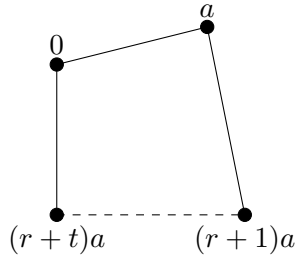
\begin{figure}[h]
\centering
\begin{tikzpicture}
\draw (-1,1) -- (1,1.5);
\draw[dashed] (-1,-1) -- (1.5,-1);
\draw (1,1.5) -- (1.5,-1);
\draw (-1,1) -- (-1,-1);
\filldraw (-1,1) circle (2.5 pt) node [anchor=south] {$0$};
\filldraw (1,1.5) circle (2.5 pt) node [anchor=south] {$a$};
\filldraw (1.5,-1) circle (2.5 pt) node [anchor=north] {$(r+1)a$};
\filldraw (-1,-1) circle (2.5 pt) node [anchor=north] {$(r+t)a$};
\end{tikzpicture}\caption{Some edges of a non-chordal cycle of ${G}$.}\label{nona}
\end{figure}
Otherwise, we apply the division algorithm to $r+t$ and $r+1$, that is
\[
r+t=(r+1)q+s \ \ \ 0 \leq s \leq r.
\]
From the vertex $(r+1)a$ we alternately add $a$ and $ra$ to get the multiples of $(r+1)a$, until $q(r+1)a$. If $s=0$, then we get $(r+t)a$, otherwise $0< s \leq r$ and $sa \in S$ so we join $q(r+1)a$ and $(r+t)a$. The above cycle has length greater than or equal to 4 because the vertices $0,a,(r+1)a,(r+t)a$ are different. Furthermore, it is non-chordal because by construction any pair of non-adjacent vertices in the cycle has labelling distance in $\{(r+1)a,\ldots, (r+t-1)a\}$. 
\item [(2$S$)] 
As in (S1), we want to construct a non-chordal cycle of $G$.
We write $k=\lfloor \frac{k}{2} \rfloor + \lceil \frac{k}{2} \rceil$ and $\lfloor \frac{k}{2} \rfloor=qr+t$ with $0 \leq t \leq r-1$. Now we write $\lceil \frac{k}{2} \rceil = qr+s$, where
\[
s=\begin{cases} t \ \ &\mbox{if } k \ \mbox{even} \\ t+1 \ \ &\mbox{if } k \ \mbox{odd} \end{cases}
\]
Then we take the cycle on vertices
\begin{equation}\label{cycle}
\Big\{0, ra, 2ra,  \ldots , qra, \Big\lfloor \frac{k}{2} \Big\rfloor a, \Big\lfloor \frac{k}{2}a \Big\rfloor + ra,
\Big\lfloor \frac{k}{2}a \Big\rfloor, \ldots \Big\lfloor \frac{k}{2} \Big\rfloor + qra \Big\}.
\end{equation}
Since $r<\lfloor \frac{k}{2} \rfloor$, then $q\geq 1$ and in the case $q=1$, $s > 0$. That is, the cycle on vertices $\eqref{cycle}$ has length at least 4 and it is not chordal because by construction any pair of non-adjacent vertices in the cycle has labelling distance in $\{(r+1)a,\ldots, \lfloor \frac{k}{2} \rfloor a\}$.\\
In any case $G$ is not chordal and the assertion follows.
\end{itemize}
\ 
\end{proof}
An immediate consequence of the previous Lemma is 
\begin{Corollary}\label{gcd}
Let $G=C_n(S)$ be a circulant graph.
If there exists $a\in S$ with $k=\ord(a)\geq 4$ such that $\gcd(a,n) \notin S$, then $G$ is not chordal.
\end{Corollary}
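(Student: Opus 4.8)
The plan is to deduce the statement directly from Lemma~\ref{orda}. Write $d=\gcd(a,n)$; then $k=\ord(a)=n/d$, and since $k\ge 4$ we have $d=n/k\le n/4$, in particular $1\le d\le\lfloor n/2\rfloor$, so that the condition $d\notin S$ is meaningful (here, as throughout Lemma~\ref{orda}, the symbol $ja$ is read as the labelling distance $|ja|_n\in T$). It suffices to show that $d$ occurs among the labelling distances $|a|_n,|2a|_n,\dots,|\lfloor k/2\rfloor a|_n$, i.e.\ that $d\in\{a,2a,\dots,\lfloor k/2\rfloor a\}$ in the notation of Lemma~\ref{orda}; once this is known, the hypothesis $d=\gcd(a,n)\notin S$ gives $\{a,2a,\dots,\lfloor k/2\rfloor a\}\nsubseteq S$, and Lemma~\ref{orda} yields that $G$ is not chordal.

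To see that $d$ is such a distance, I would argue that the cyclic subgroup $\langle a\rangle$ of $\ZZ_n$ has order $k$, hence $\langle a\rangle=\{0,d,2d,\dots,(k-1)d\}$, and consequently the multiples $a,2a,\dots,(k-1)a$ run, modulo $n$, exactly through $d,2d,\dots,(k-1)d$. Applying $|\cdot|_n$ and using the symmetry $j\leftrightarrow k-j$ together with the fact that $|jd|_n=jd$ whenever $jd\le n/2$, one obtains $\{|a|_n,|2a|_n,\dots,|\lfloor k/2\rfloor a|_n\}=\{d,2d,\dots,\lfloor k/2\rfloor d\}$, and since $k\ge4$ forces $\lfloor k/2\rfloor\ge 2$, indeed $d$ lies in this set. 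Alternatively and more concretely, by B\'ezout there is an integer $x$ with $xa\equiv d\pmod n$; reducing $x$ modulo $k$ and, if necessary, replacing $x$ by $k-x$, one may take $1\le x\le\lfloor k/2\rfloor$, and then $|xa|_n=d$.

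There is no genuine obstacle in this argument. The only point that requires a little care is the bookkeeping of the convention that the symbols $ja$ in Lemma~\ref{orda} denote the labelling distances $|ja|_n$, and the verification that $d$ is realised by one of the first $\lfloor k/2\rfloor$ multiples of $a$ rather than only by a multiple whose index exceeds $\lfloor k/2\rfloor$; this is precisely what the symmetry $j\leftrightarrow k-j$, combined with $d\le n/2$, takes care of.
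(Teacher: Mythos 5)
Your argument is correct and follows exactly the route the paper intends: the paper gives no written proof, simply declaring the corollary an immediate consequence of Lemma~\ref{orda}, and your verification that $d=\gcd(a,n)$ is realised as one of the labelling distances $|ja|_n$ with $1\le j\le\lfloor k/2\rfloor$ (via the symmetry $j\leftrightarrow k-j$ and $d\le n/2$) is precisely the missing bookkeeping. Nothing to change.
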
 

\begin{Lemma}\label{notch}
Let $G=C_n(S)$ be a circulant graph. 
If $a_{1},\ldots,a_{r} \in S$ and $\gcd(a_{1}, \ldots,a_{r}) \notin S$ then $G$ is not chordal.
\end{Lemma}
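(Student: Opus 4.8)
The plan is to prove the contrapositive: if $G=C_n(S)$ is chordal, then $\gcd(a_1,\dots,a_r)\in S$. The key observation is that $C_n(S)$ is vertex-transitive, since every translation $x\mapsto x+k$ of $\mathbb{Z}_n$ is an automorphism of $G$. Combining this with the classical fact that every chordal graph has a simplicial vertex (and that graph automorphisms send simplicial vertices to simplicial vertices) yields that \emph{every} vertex of $G$ is simplicial; in particular the neighbourhood $N(0)$ of the vertex $0$ induces a clique in $G$.

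Next I would unwind what this says. Since $N(0)=\{\,j\in\mathbb{Z}_n : |j|_n\in S\,\}$, any two elements $a,b\in S$ with $a\neq b$ are distinct vertices of $N(0)$, hence adjacent, so $|a-b|_n\in S$; and as $a,b\in\{1,\dots,\lfloor n/2\rfloor\}$ with $a\neq b$ we have $0<|a-b|<n/2$, whence $|a-b|_n=|a-b|$. Thus $S$ is closed under subtraction: if $a,b\in S$ and $a>b$ then $a-b\in S$.

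The final step is the Euclidean algorithm carried out inside $S$. For $a,b\in S$ one shows $\gcd(a,b)\in S$ by induction on $\max(a,b)$: the case $a=b$ is clear, and if $a>b$ then $a-b\in S$ by the previous step, $\gcd(a-b,b)=\gcd(a,b)$, and $\max(a-b,b)<\max(a,b)$, so we conclude by induction. Applying this repeatedly along the chain $\gcd(\gcd(a_1,\dots,a_i),a_{i+1})=\gcd(a_1,\dots,a_{i+1})$ gives $\gcd(a_1,\dots,a_r)\in S$, as desired; equivalently, if $\gcd(a_1,\dots,a_r)\notin S$ then $G$ is not chordal.

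I do not expect a real obstacle here once the simplicial-vertex input is invoked; the argument is short. If one instead wanted to avoid that input and argue by exhibiting an induced cycle, the natural candidate for two elements $a>b$ of $S$ is the $4$-cycle $0,\,a,\,a+b,\,b$, whose two diagonals have labelling distances $a-b$ and $|a+b|_n$; this cycle is induced precisely when both of these lie outside $S$, and the troublesome case $|a+b|_n\in S$ (where $a+b$ "wraps around" to a small distance) is exactly where one would have to work harder, presumably by feeding in Lemma~\ref{orda} and Corollary~\ref{gcd}. The simplicial-vertex route sidesteps this entirely, so that is the one I would take.
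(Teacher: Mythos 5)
Your proof is correct, but it takes a genuinely different route from the paper's. The paper argues constructively by exhibiting an explicit chordless cycle: it reduces (via Corollary \ref{gcd}) to the case where $a=\gcd(a_1,n)$ and $b=\gcd(a_2,n)$ lie in $S$ but $d=\gcd(a,b)$ does not, builds the $4$-cycle $\{0,ra,d,sb\}$ from a B\'ezout identity $ra+sb=d$, and when the remaining diagonal $ra-sb$ happens to lie in $S$ --- exactly the ``troublesome case'' you flag at the end --- it iterates the construction with $a'=kd$, $b'=ab/d$ until the obstruction disappears; the case $r>2$ is then a one-line induction. Your argument instead imports Dirac's classical lemma that every chordal graph has a simplicial vertex, combines it with the vertex-transitivity of $C_n(S)$ (translations are automorphisms, and automorphisms preserve simpliciality) to conclude that every vertex is simplicial, reads off from the clique $N(0)$ that $S$ is closed under differences (using $|a-b|_n=|a-b|$ for distinct $a,b\le\lfloor n/2\rfloor$), and finishes with the subtractive Euclidean algorithm inside $S$; all of these steps check out, including the disconnected case, since each component of a chordal graph is chordal. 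What your route buys is brevity and considerably more: ``every vertex is simplicial'' forces each component to be a clique, so it proves Theorem \ref{chord} outright and renders Lemma \ref{orda} and the B\'ezout machinery unnecessary. What it costs is self-containedness (Dirac's simplicial-vertex lemma is an external input that you should state and cite explicitly) and constructiveness (the paper's proof actually produces the chordless cycle witnessing non-chordality).
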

\begin{proof}
We proceed by induction on $r$.\\
Let $r=2$ and let $a_{1},a_{2}\in S$ be such that $c=\gcd(a_1,a_2) \notin S$. We consider
\[
a=\gcd(a_1,n), \ b=\gcd(a_2,n), \ d=\gcd(a,b).
\]
From Corollary \ref{gcd}, we have that if one between $a,b$ does not belong to $S$, then $G$ is not chordal. Hence $a,b \in S$.
We have that $d$ divides $c$ and we distinguish two cases. If $d \in S$, since $c=td \notin S$ for some $t$, then by Lemma \ref{orda} $G$ is not chordal. Therefore, from now on we suppose $d \notin S$. Since $a$ and $b$ divide $n$, then $\lcm(a,b)=\frac{ab}{d}$ divides $n$. We want to find a non-chordal cycle of $G$ having length 4.
Let $ra+sb=d \ \pmod n$ be a B\'{e}zout identity of $a$ and $b$. From Lemma \ref{orda}, if one between $ra$ and $sb$ is not in $S$, then $G$ is not chordal. Hence, let us assume $ra,sb\in S$. Now we consider the cycle
\[
\{0,ra,ra+sb=d,sb\}
\]
Since $d \notin S$, then the edge $\{0,d\} \notin E(G)$. We distinguish two cases about $ra-sb$. If $ra-sb \notin S$,
then the assertion follows.\\
If $ra-sb \in S$ we set 
\[
kd=\gcd(ra-sb,n ) \Rightarrow k= \gcd\Big(r\Big(\frac{a}{d}\Big)+s \Big( \frac{b}{d} \Big),\frac{n}{d}\Big).
\]
If $kd$ is not in $S$, then from Corollary \ref{gcd} $G$ is not chordal. Hence, we consider $kd\in S$. 
Since $\gcd\Big(\frac{a}{d},\frac{b}{d}\Big) =1$, then $\gcd\Big(k,\frac{a}{d}\Big)=\gcd\Big(k,\frac{b}{d}\Big) =1$, and
\begin{equation}\label{kd}
\gcd\Big(k, \frac{ab}{d^2}\Big)=1 \ \Rightarrow \gcd \Big(kd, \frac{ab}{d} \Big)=d.
\end{equation}
Hence $\lcm \Big(kd, \frac{ab}{d}\Big)= k \frac{ab}{d}$ divides $n$. We distinguish two cases.
If $k=1$, we obtain the contradiction $d \in S$, arising from the assumption $ra-sb \in S$. If $k\neq 1$, $k$ is a new proper divisor of $n$. We set $a'=kd$ and $b'=\frac{ab}{d}$, we apply the steps above and we find a $k'$ so that $k' \frac{a'b'}{d}$ divides $n$, and so on. By applying the steps above to $a'$ and $b'$ a finite number of times, we could either find a $k'$ equal to 1 or we could get new proper divisors of $n$, that are finite in number. We want to study the case $n=\frac{a'b'}{d}$. 
Let 
\[
va' + z b' =d
\] 
be a B\'{e}zout identity, we assume $va' - z b' \in S$, and we set
\[
hd=\gcd\Big(va' + z b',n\Big).
\]
We have that $h \frac{a'b'}{d}=hn$ divides $n$, that is $hn=n$ and $h=1$. It implies $d \in S$, that is a contradiction  arising from the assumption $va' - z b' \in S$. Hence $va' - z b' \notin S$ and $\{0,va',d,z b'\}$ is a non-chordal cycle of $G$.
It ends the induction basis.
For the inductive step, we suppose the statement true for $r-1$ and we prove it for $r$.
We have to prove that if $\gcd(a_{1},\ldots,a_{r}) \notin S$ then $G$ is not chordal. By inductive hypothesis if $\gcd(a_{1},\ldots,a_{r-1}) \notin S$ then $G$ will be not chordal. Hence we assume $b=\gcd(a_{1},\ldots,a_{r-1})\in S$. By applying the inductive basis to $a_r$ and $b$,we obtain that $G$ is not chordal. 
\end{proof}

Now we are able to complete the proof of Theorem \ref{chord}.

\begin{proof}[Proof of Theorem \ref{chord}.$\Rightarrow )$.]
Under the hypothesis that $G$ is chordal, we also assume that $G$ is connected and we prove that $d=1$, that is $G=K_n$. By contradiction assume that the graph is not complete, namely $G=C_n(a_1,\ldots, a_s)$ with $s<\lfloor \frac{n}{2} \rfloor$. From Lemma \ref{comp}, $G$ is connected if and only if $\gcd(a_{1},\ldots,a_{s},n)=1$.
Let $b= \gcd(a_{1},\ldots,a_{s})$. \\
If $b\notin S$, then from Lemma \ref{notch} $G$ is not chordal. If $b\in S$, we have $1=\gcd(n,a_{1},\ldots,a_{s})=\gcd(n,\gcd(a_{1},\ldots,a_{s}))=\gcd(n,b)$. If $1 \notin S$, then from Lemma \ref{notch}, $G$ is not chordal. Then $1 \in S$ and from Lemma \ref{orda} the graph $G$ is not chordal, that is a contradiction.
If $G$ is not connected, then it has $a=\gcd(n,S)$ distinct components, each of $m=\ord(a)$ vertices. By Lemma \ref{orda}, $S=\{a,2a,\ldots, \lfloor \frac{m}{2} \rfloor a \}$ and each component is the complete graph $K_{m}$.
\end{proof}

\begin{Example}
Here we present three examples of non-chordal circulant graphs $C_n(S)$.
\begin{enumerate}
\item Take $n=15$ and $S=\{2,3,4,7\}$. If we take $a=2$, then $\ord (a)=15$ and $2a=4$, $3a=6$, $n-4a=7$, and $n-6a=3$. Hence, we are in case (1$S$) of Lemma \ref{orda} with $S=\{a,2a,4a,6a \}$.
We observe that the cycle on vertices
\[
\{0,a,3a,4a\}=\{0,2,6,8\}
\]
is not chordal because $6 \notin S$.
\begin{figure}[h]
\centering
\includegraphics[scale=0.50]{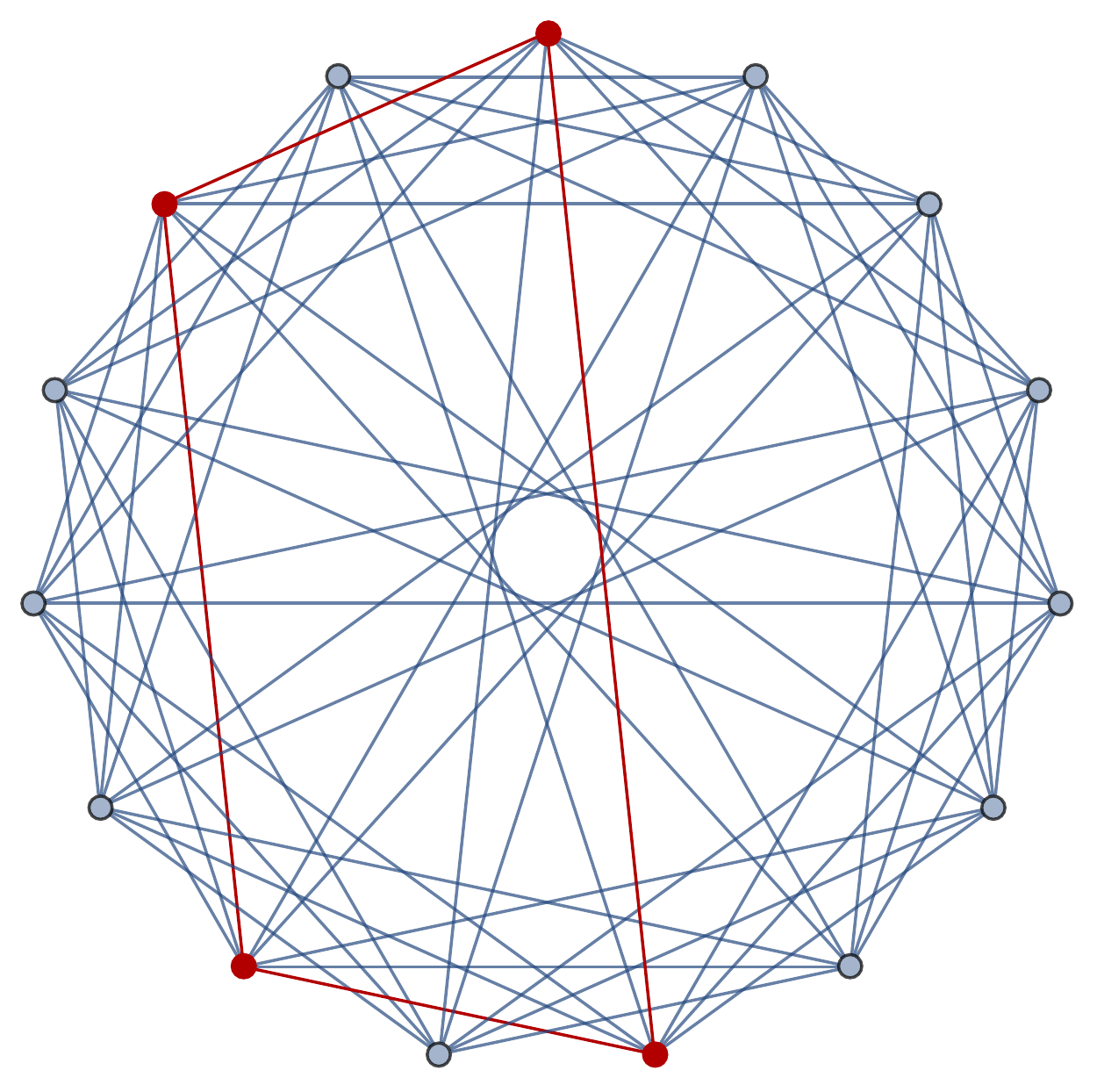}\caption{$C_{15}(2,3,4,7)$}\label{fig5}
\end{figure}
\begin{figure}[h]
\centering
\includegraphics[scale=0.50]{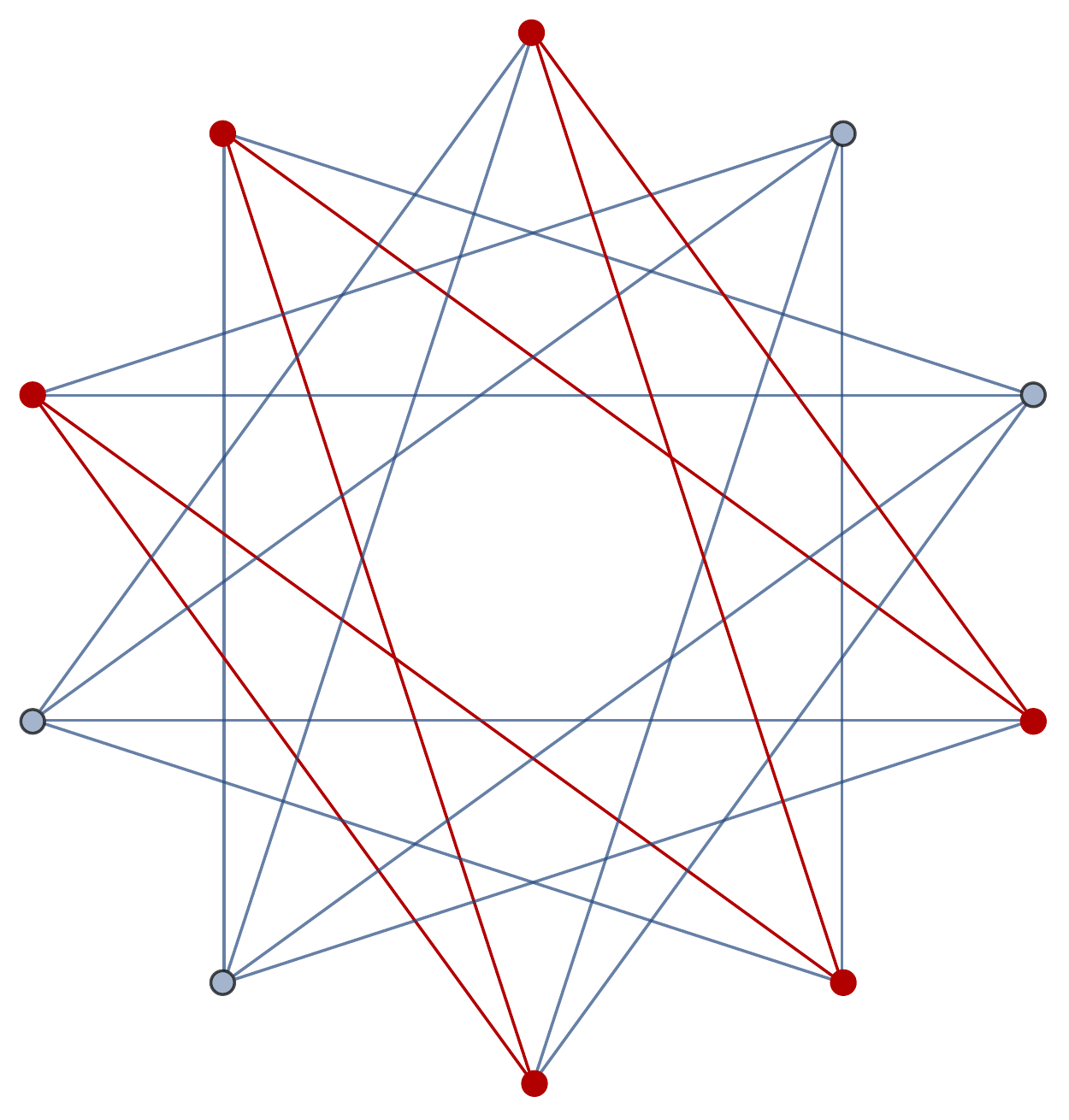}\caption{$C_{10}(3,4)$}\label{fig6} 
\end{figure}
\begin{figure}[h]
\centering
\includegraphics[scale=0.30]{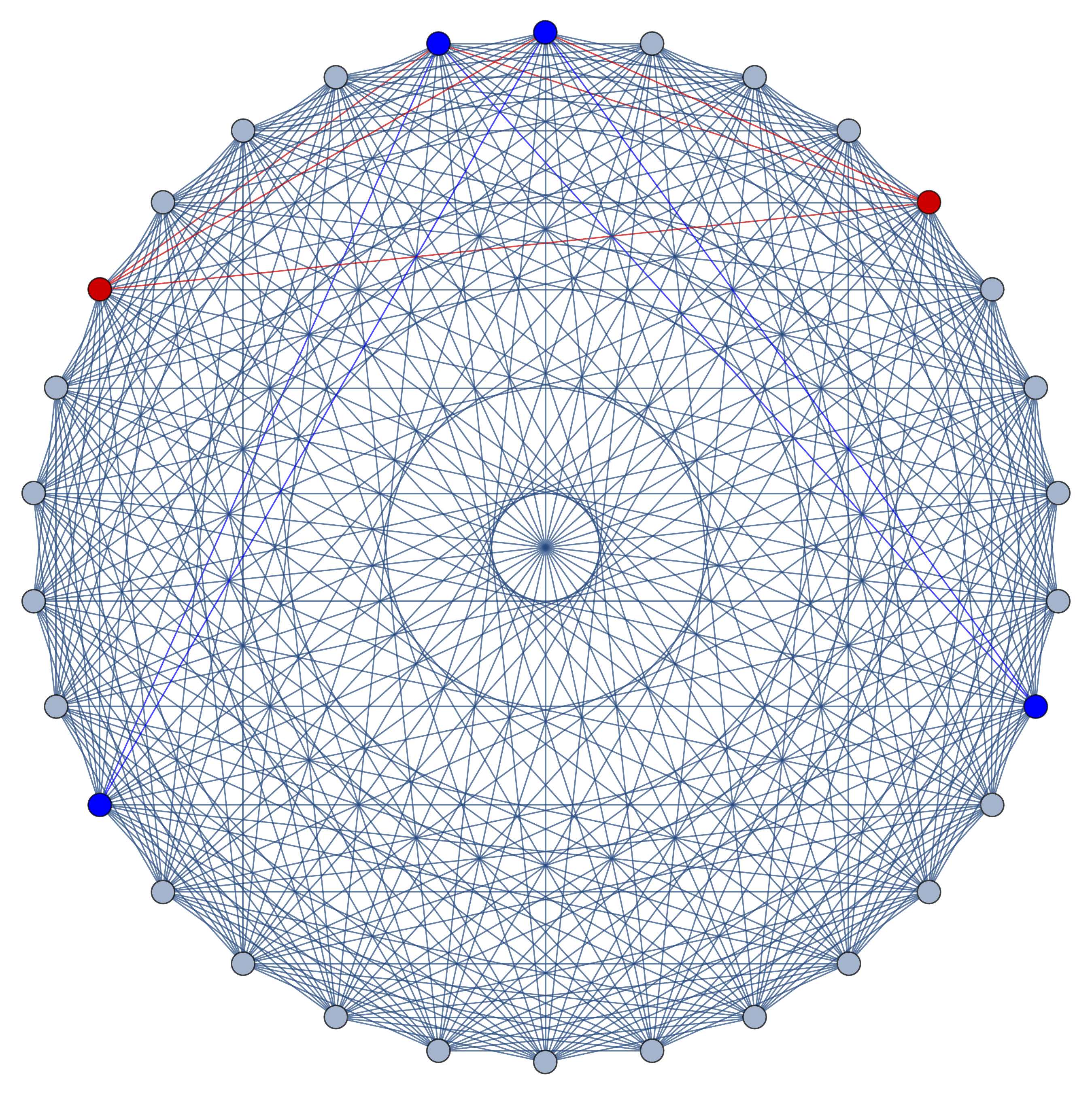}\caption{$C_{30}(2,3,4,5,6,8,9,10,12,14,15)$}\label{fig7}
\end{figure}
\item Take $n=10$, $S=\{3,4 \}$ and $a=3$. We have $\ord(a)=10$. Moreover $n-2a=4$, hence this is the case (2$S$) of Lemma \ref{orda} with $S=\{a,2a \}$. We have $\lfloor \frac{n}{2} \rfloor=\lceil \frac{n}{2} \rceil=5$, and 
\[
5=qr+t=2 \cdot 2+1.
\]
Hence, we take the cycle on vertices
\[
\{0,2a,4a,5a, 7a ,9a\}= \{0, 6, 2, 5, 1, 7\}
\]
it is not chordal because $1,2$ and $5$ do not belong to $S$.
\item We take $n=30$ and $S=\{2,3,4,5,6,8,9,10,12,14,15 \}$. We observe that $\gcd(5,2)=1\notin S$, hence we are in the case of Lemma \ref{notch} with $a_1=a=5$ and $a_2=b=2$. We observe that $\ord(a)=6,ord(b)=15$ and $2a=10, 3a=15,b,2b,\ldots,7b \in S$. We take a Bez\'{o}ut identity of $a$ and $b$
\[
1=ra+sb=5\cdot 1 -2 \cdot 2.
\]
We take the cycle on vertices $\{0,5, 1, -4\}$.
The quantity $ra-sb=5+4=9$ belongs to $S$ and $k=\gcd(9,30)=3$, while $\gcd(k,ab)=\gcd(3,10)=1$ and $n=abk=30$. Hence we write
\[1=vab+sk=10 - 3 \cdot 3\]
and we take the cycle on vertices $\{ 0, 10,1,-9\}$.
The quantity $10+9=19$ does not belong to $S$, hence the cycle above is not chordal.
\end{enumerate}
In Figures \ref{fig5},\ref{fig6},\ref{fig7} we plot the graphs of the examples, highlighting the non-chordal cycles.
\end{Example}

\section{Induced Matching Number of Circulant Graphs}\label{sec:two}
In this section we compute the induced matching number for any circulant graph $C_n(S).$ Then we plot a table representing the behaviour of $\reg R/I(G)$ with respect to the lower bound described in Theorem \ref{kat}, when $G$ is the $d$-th power of the cycle, namely $G=C_{n}(1,2,\ldots, d)$. For the computation we used \texttt{Macaulay2}.

 \begin{Definition}\label{adj}
Let $G$ be a graph with edge set $E(G)$. We say that two edges $e,e'$ are \emph{adjacent} if $e \cap e'={v}$ and $ v \in V(G)$. We say that $e,e'$ are \emph{$2$-adjacent} if there exist $v \in e$ and $u \in e'$ such that $\{u,v \} \in E(G)$.
\end{Definition}

From Definition \ref{adj}, an induced matching of $G$ is a subset of $E(G)$ where the edges are not pairwise adjacent or 2-adjacent. Then we have the following

\begin{Theorem}\label{indma}
Let $G=C_{n}(S)$ be a connected circulant graph, let $s=|S|$ and let $r=\min S$. Then $\nu (G)=\lfloor \frac{|E(G)|}{t} \rfloor$
where \[t=\begin{cases} s^2+(|A|+1)s   &\mbox{if } \frac{n}{2} \notin S \\  s^2+(|A|+1)s-2  &\mbox{if } \frac{n}{2} \in S, \ \end{cases}\] with
\[
A=\Big\{r+a \ : \ a \in S \ \mbox{ and } \  r+a\in V(G)\setminus S \Big\}.
\]
If $G$ has $d=gcd(n,S)$ components, then $\nu (G)= d \cdot \nu(C_{n/d}(S'))$, where $S'=\{s/d \ : \ s \in S \}$. 
\end{Theorem}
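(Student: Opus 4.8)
The proof naturally falls into three steps, which I would carry out in this order: a reduction to the connected case, a construction realizing the lower bound, and a counting argument for the upper bound.

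For the reduction, Lemma~\ref{comp} shows that $G=C_n(S)$ has $d=\gcd(n,S)$ components, each isomorphic to $C_{n/d}(S')$ with $S'=\{a/d:a\in S\}$, which is connected since $\gcd(n/d,S')=1$. Since an induced matching of a disjoint union is exactly a disjoint union of induced matchings of the components, $\nu(G)=d\cdot\nu(C_{n/d}(S'))$, so it suffices to treat connected $G$. For connected $G$ every vertex has degree $2s$ if $n/2\notin S$ and $2s-1$ if $n/2\in S$, so $|E(G)|$ equals $ns$, resp.\ $ns-n/2$; thus the identity to be proved reads $\nu(G)=\lfloor n/(s+|A|+1)\rfloor$ in the first case and an analogous statement in the second.

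For the lower bound I would exhibit an explicit induced matching of the right size. Put $w=s+|A|+1$ and $r=\min S$, and take the edges $\{jw,\,jw+r\}$ for $j=0,\dots,\lfloor n/w\rfloor-1$ (when $r\geq w$ one instead slides a fixed block of well-separated short edges around $\mathbb{Z}_n$). By the $\mathbb{Z}_n$-symmetry, checking that this is an induced matching reduces to checking that $\{0,r\}$ and $\{mw,\,mw+r\}$ are compatible for every $m\geq 1$ with $mw\leq\lfloor n/2\rfloor$, i.e.\ that none of $mw-r$, $mw$, $mw+r$ lies in $S$. The set $A$ is precisely the device that makes $w$ the smallest shift clearing $S$ together with its $\pm r$-translates: its elements are the ``translated non-neighbours'' $r+a$ ($a\in S$) that do not themselves lie in $S$, and the window one must clear has size $s+|A|$. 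When $n/2\in S$ one augments this family by suitable edges of length $n/2$, which accounts for the $-2$ in $t$. The resulting induced matching has $\lfloor n/w\rfloor=\lfloor |E(G)|/t\rfloor$ edges.

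For the upper bound, which is the heart of the proof, let $M=\{e_1,\dots,e_\nu\}$ be an induced matching and write $e_i=\{u_i,\,v_i\}$ with $v_i=u_i+c_i$, $c_i\in S$, $0<c_i\leq\lfloor n/2\rfloor$. I would attach to each $e_i$ the block $B_i\subseteq\mathbb{Z}_n$ of $w=s+|A|+1$ consecutive vertices starting at $u_i$ and running clockwise (with a small adjustment when $n/2\in S$), and prove that the $B_i$ are pairwise disjoint; this gives $\nu w\leq n$, hence $\nu\leq\lfloor n/w\rfloor=\lfloor |E(G)|/t\rfloor$. Disjointness is where the induced-matching hypothesis enters: if $B_i\cap B_j\neq\emptyset$ then, after translating, the left endpoint of one of the two edges, say $u_j$, lies within $w-1=s+|A|$ of $u_i$, and inspecting the four labelling distances $|u_j-u_i|$, $|u_j-v_i|$, $|v_j-u_i|$, $|v_j-v_i|$ one argues that this window is blocked by $S$ together with the translates recorded in $A$, so that at least one of these distances lies in $S$, contradicting that $M$ is induced. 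The main obstacle is exactly this disjointness case analysis: keeping track of edges $e_i$ of differing lengths $c_i$, making the window count come out to precisely $s+|A|$ (which is what pins down the value of $t$), and carrying out the separate bookkeeping in the $n/2\in S$ case.
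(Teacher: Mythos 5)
Your reduction to the connected case and the identity $\lfloor|E(G)|/t\rfloor=\lfloor n/(s+|A|+1)\rfloor$ (when $n/2\notin S$) are fine, and your overall strategy --- packing blocks of $w=s+|A|+1$ \emph{consecutive vertices} --- is genuinely different from the paper's, which instead fixes a matching edge $e=\{0,r\}$, counts the $t=s^2+(|A|+1)s$ edges that coincide with, are adjacent to, or are $2$-adjacent to $e$, and divides $|E(G)|$ by that count. However, both halves of your argument rest on the hidden assumption that the set of forbidden offsets $\{0\}\cup S\cup(r+S)$, which indeed has cardinality $w$, is the \emph{interval} $\{0,1,\dots,w-1\}$. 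That holds for $S=\{1,\dots,d\}$ but fails for general $S$, and when it fails both your construction and your disjointness claim break down.

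Concretely, take $G=C_{13}(1,5)$: here $s=2$, $r=1$, $A=\{2,6\}$, so $w=5$. Your lower-bound family is $\{0,1\},\{5,6\}$, which is not an induced matching because $5\in S$ makes $\{0,5\}$ an edge of $G$. On the other side, $\{0,1\},\{3,4\}$ \emph{is} an induced matching (the relevant labelling distances $2,3,4$ all avoid $S$), yet the blocks of $5$ consecutive vertices starting at $u_1=0$ and $u_2=3$ overlap, so the claimed pairwise disjointness of the $B_i$ is false and the inequality $\nu w\le n$ does not follow from the argument given. (It is the proof, not necessarily the inequality, that fails: the excluded offsets between left endpoints do not form a contiguous window, so consecutive vertex blocks are the wrong gadget.) To repair the argument you would have to work, as the paper does, with the actual set of edges adjacent or $2$-adjacent to a matching edge rather than with an interval of vertices, and the lower bound would require an induced matching adapted to the arithmetic of $S$ rather than the arithmetic progression $\{jw,jw+r\}$.
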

\begin{proof}
To explain the idea of proof, we first study the simple case. It is well known that the cycle $C_{n}$ has $\nu$ equals to $\lfloor \frac{n}{3}\rfloor$. It happens because, by fixing an orientation, to get the induced matching we partition the $n$ edges of the cycle in sets of $3$ adjacent edges, one in the matching, one not in the induced matching because adjacent to the first one and another one not in the induced matching because 2-adjacent to the first one. We observe that $A=\{2\}$ and the formula $t=1+(1+1)\cdot 1=3$ holds in this case.
Hence the cardinality of the maximum induced matching is equal to the number of the sets above,
\[
\nu(G)=\Big\lfloor \frac{n}{3} \Big\rfloor.
\]
The example shows that $\nu (G)$ corresponds to the number of sets consisting in one edge in the matching and the adjacent or 2-adjacent edges to that one. So we have only to count the edges.\\
We assume that $s=|S|$, $r=\min S$ and $S=\{a_{0}=r,a_{1},\ldots ,a_{s-1}\}$, we assume that the edge $e=\{0,r\}$ is in the induced matching, and let $E'$ be the set containing $e$ and the edges adjacent or 2-adjacent to $e$. The edges adjacent to $e$ are $\{0,a_{i}\}$ $i=1,\ldots,s-1$ and $\{r,b_{i}=r+a_{i}\}$ for $i=0,\ldots,s-1$. The above edges are all distinct. The edges $2$-adjacent to $e$ are $\{a_{j},a_{j}+a_{i}\}$ for $j \in \{1,\ldots, s-1\},i\in \{0,\ldots,s-1\}$ and $\{b_{j},b_{j}+a_{i}\}$ for $i,j\in\{0,\ldots,s-1\}$. The edges above may not be all distinct. In fact, it can happen that some $b_{j}$ coincides with some $a_{k}$, in that case $\{b_{j},b_{j}+a_{i}\}=\{a_{k},a_{k}+a_{i}\}$ for any $i \in \{0,\ldots,s-1\}$. Then, we only consider $\{b_{j},b_{j}+a_{i}\}$ for $i \in \{0,\ldots,s-1\}$ when $b_{j} \in A$. To sum up, in the set $E'$ we find: 
\begin{itemize}
\item[a)] The $s$ edges $\{0,a_{i}\}$ for $i \in \{0,\ldots,s-1\}$;
\item[b)] The $s^2$ edges $\{a_{j},a_{j}+a_{i}\}$ for $i,j \in \{0,\ldots,s-1\}$;
\item[c)] The $s\cdot |A|$ edges $\{b,b+a_{i}\}$ for $i \in \{0,\ldots,s-1\}$ and $b\in A$.
\end{itemize}
If $a_{s-1}=\frac{n}{2}$, then $b_{s-1}=r+a_{s-1}\in A$ and the edges $\{a_{s-1},a_{s-1}+a_{s-1}=0\}$ of point b) and $\{b_{s-1},b_{s-1}+a_{s-1}=r \}$  of point c) are already counted. The assertion follows.



For the case disconnected, let $d=gcd(n,S)$ be the number of disjoint connected components of the graph G. Since the components are disjoint, it turns out that $\nu (G)$ is $d$ times the induced matching number of one component. That component is $C_{n/d}(S')$
where $S'=\{s/d \ : \ s \in S \}$, hence the assertion follows.
\end{proof}

The formula in Theorem \ref{indma} can be written in a compact way when $G$ is the $d$-th power of a cycle. We set $C_{n}^d=C_{n}(\{1,2,\ldots,d\})$.
\begin{Corollary}\label{indpow}
Let $C_{n}^d$ be the $d$-th power of a cycle and $d < \lfloor \frac{n}{2} \rfloor $. Then
\[
\nu (G)= \Big\lfloor \frac{n}{d+2} \Big\rfloor.
\]
\end{Corollary}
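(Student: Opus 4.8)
The plan is to specialize the general formula of Theorem~\ref{indma} to the set $S=\{1,2,\ldots,d\}$ and simplify the resulting expression. First I would record the basic parameters: since $d<\lfloor n/2\rfloor$ we have $n/2\notin S$, so we are in the first branch of the formula; moreover $s=|S|=d$ and $r=\min S=1$. Next I would compute the set $A=\{1+a:a\in S,\ 1+a\in V(G)\setminus S\}$. The candidates are $1+a$ for $a\in\{1,\ldots,d\}$, i.e.\ the integers $2,3,\ldots,d+1$; of these, $2,\ldots,d$ already lie in $S$, so the only survivor is $d+1$ — provided $d+1\in V(G)$, which holds since $d+1\le\lfloor n/2\rfloor<n$. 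Hence $|A|=1$.

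Plugging into $t=s^2+(|A|+1)s$ gives $t=d^2+2d=d(d+2)$. It remains to count $|E(G)|$: for a connected circulant $C_n(S)$ with $n/2\notin S$ each element of $S$ contributes $n$ edges and these are pairwise distinct, so $|E(G)|=ns=nd$. (One should note $C_n^d$ is connected because $1\in S$, so Lemma~\ref{comp} gives one component and Theorem~\ref{indma} applies in its connected form.) Therefore
\[
\nu(G)=\Big\lfloor\frac{|E(G)|}{t}\Big\rfloor=\Big\lfloor\frac{nd}{d(d+2)}\Big\rfloor=\Big\lfloor\frac{n}{d+2}\Big\rfloor,
\]
which is the claimed formula.

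I do not expect any serious obstacle here: the corollary is a direct substitution into Theorem~\ref{indma}, and the only points requiring a line of care are (i) verifying that $A$ reduces to the single element $d+1$, which is where the ``$d$-th power'' structure is genuinely used, and (ii) confirming the edge count $|E(C_n^d)|=nd$ under the standing hypothesis $d<\lfloor n/2\rfloor$ (so that no element of $S$ equals $n/2$ and no double-counting of edges occurs). Both are elementary, so the proof is short.
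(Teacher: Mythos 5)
Your proposal is correct and follows essentially the same route as the paper: both proofs specialize Theorem~\ref{indma} with $s=d$, $r=1$, $A=\{d+1\}$, $|E(G)|=nd$, and $t=d(d+2)$. You are somewhat more explicit than the paper about why $A$ collapses to $\{d+1\}$, why $n/2\notin S$, and why the graph is connected, but these are just the details the paper leaves implicit.
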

\begin{proof}
We want to apply Proposition \ref{indma}, with $s=d$ and $|E(G)|=nd$. We have $r=1$ and $A=\{d+1\}$. Hence it follows that 
$t=d^2+d+d \cdot 1=d^2+2d=d(d+2)$, that is
\[
\nu (G)= \Big\lfloor \frac{nd}{d(d+2)} \Big\rfloor= \Big\lfloor \frac{n}{d+2} \Big\rfloor.
\]
\end{proof}

In Table \ref{Tab}, we compare the values of $\reg R/I(C_n^d)$ for $n \leq 15$ and $1 \leq d \leq \lfloor\frac{n}{2} \rfloor $. We highlight that the regularity of $R/I(G)$ is strictly greater than $\nu(G)$ in two different cases:
\begin{itemize}
\item[(1)] when $G=C_n$ and $n \equiv 2 \pmod 3$.
\item[(2)] when $G=C_{n}^{\lfloor\frac{n}{2} \rfloor-1}$ and $n$ is odd.
\end{itemize}
The two anomalous cases were expected: in case (1), we know from Theorem \ref{cyc} that $\reg R/I(G)=\nu +1$; in case (2), $\nu(G)=1$ while $\bar{G}=C_n(\lfloor\frac{n}{2} \rfloor)$ that is a cycle and hence it is not chordal; hence from Theorem \ref{fr} we know that $\reg R/I(G)=2$. 

In general, it seems that apart from cases (1) and (2), the Castelnuovo-Mumford regularity of the $d$-th power of a cycle grips the bound of $\nu(G)$.

\begin{table}[h]
\centering
\begin{small}
\begin{tabular}{| l | c | c | | l | c | c | }
\hline
$\ \ \ \ \ \ \ \ \ \ G$ &$\nu (G)$ &$\reg R/I(G)$ &$\ \ \ \ \ \ \ \ \ \ \ \ G$ &$\nu (G)$ &$\reg R/I(G)$\\
\hline
$C_6(\{1\})$ &2    & 2                    &$C_{12}(\{1,2,3\})$              &2    &2 \\
$C_6(\{1,2\})$                 &1    &1  &$C_{12}(\{1,2,3,4\})$            &2    &2 \\
$C_7(\{1\})$                    &2    &2  &$C_{12}(\{1,2,3,4,5\})$          &1    &1 \\
$C_7(\{1,2\})$                 &1    &2  &$C_{13}(\{1\})$                   &4    &4 \\
$C_8(\{1\})$                    &2    &3  &$C_{13}(\{1,2\})$                &3    &3 \\
$C_8(\{1,2\})$                 &2    &2  &$C_{13}(\{1,2,3\})$              &2    &2 \\
$C_8(\{1,2,3\})$               &1    &1  &$C_{13}(\{1,2,3,4\})$            &2    &2 \\
$C_9(\{1\})$                    &3    &3  &$C_{13}(\{1,2,3,4,5\})$          &1    &2 \\
$C_9(\{1,2\})$                 &2    &2  &$C_{14}(\{1\})$                   &4    &5 \\ 
$C_9(\{1,2,3\})$               &1    &2  &$C_{14}(\{1,2\})$                &3    &3 \\
$C_{10}(\{1\})$                   &3    &3  &$C_{14}(\{1,2,3\})$              &2    &2 \\
$C_{10}(\{1,2\})$                &2    &2  &$C_{14}(\{1,2,3,4\})$            &2    &2 \\
$C_{10}(\{1,2,3\})$              &2    &2  &$C_{14}(\{1,2,3,4,5\})$          &2    &2 \\
$C_{10}(\{1,2,3,4\})$            &1    &1  &$C_{14}(\{1,2,3,4,5,6\})$        &1    &1 \\
$C_{11}(\{1\})$                   &3    &4  &$C_{15}(\{1\})$                   &5    &5 \\
$C_{11}(\{1,2\})$                &2    &2  &$C_{15}(\{1,2\})$                &3    &3 \\
$C_{11}(\{1,2,3\})$              &2    &2  &$C_{15}(\{1,2,3\})$              &3    &3 \\
$C_{11}(\{1,2,3,4\})$            &1    &2  &$C_{15}(\{1,2,3,4\})$            &2    &2 \\
$C_{12}(\{1\})$                   &4    &4  &$C_{15}(\{1,2,3,4,5\})$          &2    &2 \\
$C_{12}(\{1,2\})$                &3    &3  &$C_{15}(\{1,2,3,4,5,6\})$        &1    &2  \\
\hline
\end{tabular}\caption{The behavior of $\reg R/I(G)$ with respect to $\nu(G)$ for $G=C_n^d$.}\label{Tab}
\end{small}
\end{table}
\ \\
\textbf{Acknowledgements:} I would like to thank Giancarlo Rinaldo for going through the manuscript and making some useful suggestions.

\newpage
\newpage

\end{document}